\newcommand{\tnz}{\otimes}
\newtheorem{theorem}{Theorem}[section]
\newtheorem{proposition}[theorem]{Proposition}
\newtheorem{lemma}[theorem]{Lemma}
\theoremstyle{remark}
\newcommand{\N}{\mathbb{N}}
\newcommand{\iso}{\cong}
\newcommand{\B}{\tilde{M}}
\newcommand{\Ce}{\mathcal{C}}
\newcommand{\CC}{k(\xi)}
\newcommand{\K}{k[\xi]}
\newcommand{\cha}{\mathrm{char}}
\newcommand{\adj}{{\rm adj}}
\begin{document}
\title[Functional identities of one variable]{Functional identities of one variable}

\author{Matej Bre\v sar}
\author{\v Spela \v Spenko}
\address{M. Bre\v sar,  Faculty of Mathematics and Physics,  University of Ljubljana,
 and Faculty of Natural Sciences and Mathematics, University
of Maribor, Slovenia} \email{matej.bresar@fmf.uni-lj.si}
\address{\v S. \v Spenko,  Institute of  Mathematics, Physics, and Mechanics,  Ljubljana, Slovenia} \email{spela.spenko@imfm.si}

\begin{abstract}
Let $A$ be a centrally closed prime algebra over a characteristic $0$ field $k$, and let $q:A\to A$ be the trace of a $d$-linear map (i.e., $q(x)=M(x,\ldots,x)$ where $M:A^d\to A$ is a  $d$-linear map).
If $[q(x),x]=0$ for every $x\in A$, then $q$ is of the form $q(x) =\sum_{i=0}^{d} \mu_i(x)x^i$ where each $\mu_i$ is the trace of a  $(d-i)$-linear map from $A$ into $k$. For infinite dimensional algebras
and algebras of dimension  $>d^2$ this was proved by Lee, Lin, Wang, and Wong in 1997.
In this paper we cover the remaining case where the dimension is $ \le d^2$. Using this result we are able to handle general functional identities of one variable on $A$; more specifically,  
 we describe  the traces of  $d$-linear maps $q_i:A\to A$ that satisfy $\sum_{i=0}^m x^i q_i(x)x^{m-i}\in k$ for every $x\in A$.
\end{abstract}

\keywords{Functional identities, commuting maps, matrix algebra, centrally closed prime algebra, Cayley-Hamilton theorem, adjugate matrix, integrally closed domain.}
\thanks{2010 {\em Math. Subj. Class.} Primary 16R60. Secondary 13A50, 13B22,  16R20.}
\thanks{Supported by ARRS Grant P1-0288.}

\maketitle

\section{Introduction}

Let $R$ be a ring.
A map $q:R\to R$ is said to be  {\it commuting} if $[q(x),x]=q(x)x-xq(x)=0$ for all $x\in R$. The study of commuting maps has a long and rich history, starting with Posner's 1957 theorem stating  that there are no  nonzero commuting derivations on  noncommutative prime rings \cite{Pos}. The reader is referred to the survey article \cite{Bre00} for the general theory of commuting maps. We will be concerned with commuting traces of multilinear maps on prime algebras.
A map $q$ between additive groups (resp. vector spaces)  $A$ and $B$ is called the  trace of a $d$-additive (resp. $d$-linear)  map if there exists a $d$-additive (resp. $d$-linear) map $M:A^d\to B$ such that  $q(x)=M(x,\dots,x)$ for all $x\in A$ (for $d=0$ this should be understood as that $q$ is a constant). 
If $R$ is a prime ring (resp. algebra) and $q:R\to R$ is a commuting trace of a $d$-additive map,  is then  $q$  necessarily of a {\em standard form}, meaning that there exist traces of $(d-i)$-additive (resp. $(d-i)$-linear)  maps $\mu_i$ from $R$ into the extended centroid $C$ of $R$ such that
$q(x) =\sum_{i=0}^{d} \mu_i(x)x^i$ for all $x\in R$?  This question was initiated in 1993 by the first author who  obtained an affirmative answer   for $d=1$ \cite{Bre93'}, and also  for $d=2$   
 \cite{Bre93} provided that char$(R)\ne 2$ and   $R$ does not satisfy $S_4$, the standard polynomial identity of degree $4$. The $d=3$ case was treated, in a slightly different context, by Beidar, Martindale, and Mikhalev \cite{BMM}. 
These three results have turned out to be applicable to various problems, particularly in the Lie algebra theory, and have played a crucial role in the development of the theory of functional identities \cite{FI}.
 The next step was made in 1997 by
Lee, Lin, Wang, and Wong \cite{LLWW}, who answered the above question in affirmative for a general $d$, but under the assumption that $\cha(R)=0$ or $\cha(R)>d$ and $R$ does not satisfy the standard polynomial identity 
$S_{2d}$. The reason for the exclusion of rings satisfying polynomial identities (of low degrees) is the method of the proof; it is, therefore, natural to ask whether, by a necessarily different method, one can get rid of this assumption. For $d=2$ this has turned out to be the case
\cite{BS}. To the best of our knowledge,  the question is still open  for a general $d$ (cf. \cite[p. 377]{Bre00}
   and \cite[p. 130]{FI}). The first main goal of this paper is to give an affirmative answer for centrally closed prime algebras and traces of $d$-linear maps (Theorem \ref{krce}).   In this setting the problem can be  reduced to the case where the algebra in question is a full matrix algebra. The essence of our approach is interpreting the reduced problem in the algebra of generic matrices, which enables  to use the tools of Commutative Algebra.
   
In the second part of the paper,  in Sections \ref{s4}--\ref{s7}, we will deal with considerably more general functional identities. However,  the main issue will be the reduction to the commuting trace case.  Roughly speaking, the general theory of  functional identities deals with  expressions of the form
   $$
   \sum x_{i_1}\cdots x_{i_p} F_t( x_{j_1},\ldots, x_{j_q}) x_{k_1}\cdots x_{k_r},
   $$ 
 which are assumed  to be either $0$ or central for all elements $x_{i_1},\ldots,x_{k_r}$; here, $F_t$ are arbitrary   functions, and the goal is to describe their form \cite{FI}. One usually assumes that the ring (algebra) in question does not satisfy nontrivial polynomial identities (of low degrees), which makes the problem essentially easier. The second main goal of this paper is to study functional identities of one variable in general  centrally closed prime algebras. That is, we will consider  the situation where the summation  $\sum_{i=0}^m x^i q_i(x)x^{m-i}$ is always zero or central; here, $q_i$ will be assumed to be the traces of multilinear maps (so that after the linearization one gets an expression as above). 
 The main novelty is that we will handle  the finite dimensional case (which can be actually easily reduced to the  full matrix algebra case). We will see that the only   ``solutions" of our functional identity that do not exist already in the infinite dimensional case arise from the Cayley--Hamilton theorem; more precisely, they can be expressed through the adjugation  (Theorem \ref{MT}).

For simplicity, we assume throughout the paper that $k$ is a field with $$\cha(k)=0.$$ 
From the proofs it is evident that in some of the results we only need  $k$ to be infinite or having characteristic either $0$ or big enough.


\section{Commutative algebra preliminaries}\label{s2}

 Let
 $\xi=\{\xi_{ij}\mid 1\leq i,j\leq n\}$ be the set of commuting indeterminates.    We write  $Y$ for the generic $n\times n$ matrix $(\xi_{ij})$, and  $c(t)$ for the characteristic polynomial of  $Y$, i.e., $c(t)=\det(t-Y) \in  k[\xi][t]$ (throughout the paper, we use the same notation for scalars and scalar multiples of unity).

\begin{lemma}\label{ch}
The polynomial  $c(t)$   is  irreducible in $\CC[t]$. Accordingly,  $k[\xi][t]/(c(t))$ is an integral domain.
\end{lemma}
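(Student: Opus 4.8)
The plan is to prove the equivalent statement that $c(t)$ is irreducible in the polynomial ring $\K[t]$, and to do this by specializing $Y$ to a companion matrix whose characteristic polynomial is transparently irreducible. Note first that $c(t)$ is monic in $t$, hence primitive as an element of $\K[t]$, where $\K=k[\xi]$ is a UFD; thus by Gauss's lemma $c(t)$ is irreducible in $\CC[t]$ if and only if it is irreducible in $\K[t]$. Moreover, once irreducibility in the UFD $\K[t]$ is known, $(c(t))$ is a prime ideal and so $\K[t]/(c(t))$ is an integral domain, which gives the ``accordingly'' clause. So everything reduces to showing $c(t)$ is irreducible in $\K[t]$.

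To this end, introduce new indeterminates $u_0,\dots,u_{n-1}$ and let $\phi\colon k[\xi]\to k[u_0,\dots,u_{n-1}]$ be the $k$-algebra homomorphism sending $Y$ to the companion matrix
$$
C=\begin{pmatrix}
0 & 0 & \cdots & 0 & u_0\\
1 & 0 & \cdots & 0 & u_1\\
0 & 1 & \cdots & 0 & u_2\\
\vdots & & \ddots & & \vdots\\
0 & 0 & \cdots & 1 & u_{n-1}
\end{pmatrix},
$$
that is, $\xi_{i,i-1}\mapsto 1$ for $2\le i\le n$, $\xi_{i,n}\mapsto u_{i-1}$ for $1\le i\le n$, and $\xi_{ij}\mapsto 0$ otherwise; this is well defined because the $\xi_{ij}$ are free polynomial generators. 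Extending $\phi$ coefficientwise to $\K[t]\to k[u_0,\dots,u_{n-1}][t]$ and using that the determinant is a polynomial expression in the matrix entries, one obtains $\phi(c(t))=\det(t-C)=t^n-u_{n-1}t^{n-1}-\cdots-u_1t-u_0$.

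Now suppose $c(t)=f(t)g(t)$ in $\K[t]$ with $f,g$ non-units. Since $c(t)$ is monic and the units of $\K$ are exactly the nonzero elements of $k$, we may rescale so that $f$ and $g$ are monic of positive degree in $t$. Applying $\phi$ gives $t^n-u_{n-1}t^{n-1}-\cdots-u_0=\phi(f)\phi(g)$ with $\phi(f),\phi(g)$ still monic of the same positive degrees in $t$, hence non-units. But $t^n-u_{n-1}t^{n-1}-\cdots-u_1t-u_0$ is irreducible in $k[u_0,\dots,u_{n-1},t]$: viewed as a polynomial in the single variable $u_0$ over $k[u_1,\dots,u_{n-1},t]$ it is, up to the unit $-1$, of the form $u_0$ minus an element not involving $u_0$, so the quotient by it is $k[u_1,\dots,u_{n-1},t]$, a domain, and it generates a prime ideal. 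This contradiction finishes the proof. There is no real obstacle here: the only points needing care are the routine bookkeeping that the companion specialization genuinely sends the generic characteristic polynomial to the displayed one and that monic factors stay monic of positive degree under $\phi$; the actual content is just the choice of this specialization, for which irreducibility is immediate.
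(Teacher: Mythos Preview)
Your proof is correct and takes a genuinely different route from the paper's. Both arguments begin by invoking Gauss's lemma to reduce from $k(\xi)[t]$ to the polynomial ring $k[\xi][t]$, but they diverge at the core step. The paper argues by contradiction: in a putative nontrivial factorization $c=fg$ in $k[\xi][t]$, the constant terms (with respect to $t$) of $f$ and $g$ multiply to $(-1)^n\det(Y)$, and the irreducibility of the generic determinant (cited from Bergman) forces one of $f,g$ to have degree~$0$ in $t$, a contradiction. Your approach instead specializes the generic matrix $Y$ to a companion matrix and reduces to the irreducibility of $t^n-u_{n-1}t^{n-1}-\cdots-u_0$, which is immediate because this polynomial is linear in $u_0$. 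Your argument is more self-contained, since it avoids invoking the nontrivial external fact that $\det(Y)$ is irreducible; the trade-off is that you introduce auxiliary variables and a specialization map, whereas the paper's argument stays entirely within $k[\xi][t]$ once the determinant fact is granted.
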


\begin{proof}
We consider $c=c(t)$ as a polynomial in indeterminates $t,\xi_{ij},1\leq i,j\leq n$. 
Suppose $c=fg$ for some $f,g\in k(\xi)[t]$ of positive degree in $t$. Then $c$ is reducible already in 
$k[\xi][t]$ (see, e.g., \cite[Corollary 3.5.4]{CLO}), so we may assume that $f,g\in k[\xi][t]$. Since $c$ has degree $n$, $f$ and $g$ have to be of degree strictly less than $n$. Regarding $c,f,g$   as polynomials in $t$, the constant term of $c$ equals the product of the constant terms of $f$ and $g$. Since the former equals $(-1)^n\det(Y)$, which is irreducible  \cite[pp. 7--8]{Berg}, and none of $f$ and $g$ can be constant due to the degree restriction, we have arrived at a contradiction.
\end{proof}

\begin{lemma}\label{prehod}
The algebra $k[\xi][t]/(c(t))$ 
is isomorphic to the subalgebra $k[\xi][Y]$ of  the algebra $M_n(k[\xi])$.
\end{lemma}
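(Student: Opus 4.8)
The plan is to build the isomorphism from the Cayley--Hamilton theorem and then verify injectivity using the irreducibility established in Lemma \ref{ch}. First, since the $\xi_{ij}$ are commuting indeterminates and $Y\in M_n(k[\xi])$, there is a unique $k[\xi]$-algebra homomorphism $k[\xi][t]\to M_n(k[\xi])$ sending $t$ to $Y$, and by definition its image is the subalgebra $k[\xi][Y]$. The Cayley--Hamilton theorem gives $c(Y)=0$, so this homomorphism annihilates the ideal $(c(t))$ and therefore factors through a surjective $k[\xi]$-algebra homomorphism
$$
\varphi:k[\xi][t]/(c(t))\longrightarrow k[\xi][Y].
$$
Everything then reduces to showing $\varphi$ is injective.

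Because $c(t)$ is monic of degree $n$, the quotient $k[\xi][t]/(c(t))$ is a free $k[\xi]$-module with basis the residues of $1,t,\dots,t^{n-1}$, and $\varphi$ sends these to $1,Y,\dots,Y^{n-1}$. Hence it suffices to prove that $1,Y,\dots,Y^{n-1}$ are linearly independent over $k[\xi]$, equivalently over its fraction field $k(\xi)$. Suppose not; then some nonzero $p\in k(\xi)[t]$ with $\deg p\le n-1$ satisfies $p(Y)=0$. The ideal of $k(\xi)[t]$ annihilating $Y$ is generated by a monic polynomial $m$ which is nonconstant (as $1$ does not annihilate $Y$) and divides $p$, so $\deg m\le n-1$; but $m$ also divides the characteristic polynomial $c(t)$, which is irreducible in $k(\xi)[t]$ by Lemma \ref{ch}. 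Thus $m=c$ and $\deg m=n$, a contradiction.

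Consequently $1,Y,\dots,Y^{n-1}$ form a $k[\xi]$-basis of $k[\xi][Y]$, so $\varphi$ carries a basis to a basis and is an isomorphism. The one genuinely load-bearing step is the passage from ``$p(Y)=0$ for some $p$ of degree $<n$'' to a contradiction: this is precisely where irreducibility of $c(t)$ over $k(\xi)$, i.e.\ Lemma \ref{ch}, is indispensable, since otherwise $Y$ could conceivably satisfy a proper factor of its characteristic polynomial. Everything else is the routine observation that a surjection between free modules of the same finite rank, carrying a generating system coming from a basis, is an isomorphism.
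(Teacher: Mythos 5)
Your proof is correct and takes essentially the same route as the paper: both define the natural map $t\mapsto Y$, use the Cayley--Hamilton theorem to see it is well defined and surjective, and derive injectivity from the irreducibility of $c(t)$ over $k(\xi)$ established in Lemma \ref{ch}. The paper phrases the injectivity step by extending to $k(\xi)[t]/(c(t))$ and noting this is a field, which is your minimal-polynomial/linear-independence argument in slightly different clothing.
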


\begin{proof}
Define a homomorphism from $k[\xi][t]/(c(t))$ to $k[\xi][Y]$ according to  $\xi_{ij} + (c(t))\mapsto \xi_{ij},\; t + (c(t))\mapsto Y$. 
Since $Y=(\xi_{ij})$ is a zero of its characteristic polynomial, this homomorphism is well-defined. It is obviously surjective.
  We can extend it to a homomorphism from $k(\xi)[t]/(c(t))$ to $M_n(k(\xi))$ which has trivial kernel as  $k(\xi)[t]/(c(t))$ is a field by Lemma \ref{ch}.  
\end{proof}

Recall that an integral domain $R$ is said to be integrally closed if it is integrally closed in its field of fractions $Q$. That is, if $f$ is a  monic polynomial  with coefficients in $R$, then every root of $f$ from $Q$ actually lies in  $R$.

\begin{proposition}\label{cz}
The algebra $k[\xi][Y]$ is integrally closed.
\end{proposition}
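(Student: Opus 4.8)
The plan is to exhibit $k[\xi][Y]$ as an affine hypersurface and invoke Serre's normality criterion, ``$(R_1)$ and $(S_2)$ $\Rightarrow$ normal''. By Lemma~\ref{prehod}, $R:=k[\xi][Y]$ is isomorphic to $k[\xi][t]/(c(t))$, a quotient of the polynomial ring $k[\xi][t]$ in the $n^2+1$ indeterminates $\xi_{ij}$ and $t$ by the single element $c(t)=\det(t-Y)$, which is nonzero, hence a nonzerodivisor; by Lemma~\ref{ch}, $R$ is an integral domain. As a hypersurface in a regular ring, $R$ is Cohen--Macaulay, so it satisfies Serre's condition $(S_2)$ automatically. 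It therefore suffices to verify $(R_1)$: since $k$ is perfect, this amounts to showing that the non-smooth locus of the hypersurface $V(c)\subseteq\mathbb{A}^{n^2+1}_k$ has codimension at least $2$ in $V(c)$.

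To locate that locus I would apply the Jacobian criterion together with Jacobi's formula $\partial\det(M)/\partial s=\tr\!\bigl(\adj(M)\,\partial M/\partial s\bigr)$ with $M=t-Y$. This gives $\partial c/\partial t=\tr(\adj(t-Y))$ and $\partial c/\partial\xi_{ij}=\pm\adj(t-Y)_{ji}$ for all $i,j$, and conversely every entry of $\adj(t-Y)$ (being, up to sign, an $(n-1)\times(n-1)$ minor of $t-Y$) occurs among these partials. Consequently, on the singular locus \emph{every} entry of $\adj(t-Y)$ vanishes; equivalently $\operatorname{rank}(t-Y)\le n-2$ there. (For $n=1$ this set is empty and $R=k[\xi]$ is already a polynomial ring, so assume $n\ge2$.) The key structural observation --- and where a careless approach goes astray --- is that the \emph{entire} adjugate, not merely $\det(t-Y)$, vanishes on the singular locus: the condition $\operatorname{rank}(t-Y)\le n-1$ would just cut out $V(c)$ again, whereas $\operatorname{rank}(t-Y)\le n-2$ is genuinely restrictive.

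It remains to bound the dimension of the locus $Z\subseteq\mathbb{A}^{n^2+1}_k$ where $t-Y$ has rank $\le n-2$. Projecting $Z$ onto the $t$-axis, each fibre is carried by the affine isomorphism $W\mapsto t-W$ onto the determinantal variety of $n\times n$ matrices of rank $\le n-2$, whose dimension is the standard value $(n-2)(n+2)=n^2-4$ (codimension $\bigl(n-(n-2)\bigr)^2=4$ in $M_n$). Hence $\dim Z\le 1+(n^2-4)=n^2-3$, so the singular locus, being contained in $Z\cap V(c)$, has codimension at least $n^2-(n^2-3)=3\ge 2$ in the $n^2$-dimensional irreducible variety $V(c)$. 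This establishes $(R_1)$; by Serre's criterion $R$, and hence $k[\xi][Y]$, is normal, i.e.\ integrally closed.

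I expect the only genuine inputs beyond routine bookkeeping to be the two points just highlighted: the determinantal codimension count in the last step (a standard fact about generic determinantal varieties), and the identification in the middle step of the Jacobian ideal of $c$ with an ideal containing all the entries of $\adj(t-Y)$. Everything else is the Jacobian criterion for finite-type algebras over a perfect field together with the Cohen--Macaulayness of a hypersurface.
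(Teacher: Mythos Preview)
Your argument is correct and takes a genuinely different route from the paper. The paper first performs a change of variables $\xi'_{ii}=\xi_{ii}-t$ to reduce the question to normality of $B=k[\xi]/(\det Y)$, and then identifies $B$ with the ring of $GL_{n-1}$-invariants of a polynomial ring $D$ via the first fundamental theorem for the general linear group; normality then follows from the elementary observation that an element of $\operatorname{Frac}(B)$ integral over $B$ is integral over the polynomial ring $D$, hence lies in $D$, and, being invariant, lies in $D^{GL_{n-1}}=B$. Your approach stays with the hypersurface $\{c=0\}$ itself, invokes Serre's criterion (Cohen--Macaulayness of a hypersurface gives $(S_2)$ for free), and verifies $(R_1)$ by computing the Jacobian ideal: Jacobi's formula shows the partials of $c$ are exactly the entries of $\adj(tI-Y)$, so the singular locus is the rank $\le n-2$ locus, whose fibres over the $t$-line are copies of the generic determinantal variety of codimension $4$ in $M_n$. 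This yields singular codimension $3$ in $V(c)$, more than enough. Your route avoids invariant theory entirely and is arguably more self-contained for a reader coming from commutative algebra; the paper's route, on the other hand, never needs Serre's criterion or the determinantal dimension count, trading those for the first fundamental theorem and the trivial normality of a polynomial ring. Both are clean; the key insight you correctly isolate---that the \emph{full} adjugate, not just the determinant, must vanish on the singular locus---is precisely what makes the Jacobian route succeed here.
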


\begin{proof}
In view of Lemma \ref{prehod},   we have to show that $k[\xi][t]/(c(t))$  is integrally closed. To this end, it is enough to prove that  $B= k[\xi]/(\det(Y))$ is integrally closed. 
Indeed, let us set  $\xi'_{ii} = \xi_{ii} - t$ and  $\xi'_{ij} = \xi_{ij}$ for $i\ne j$. Then $Y-t$ is 
a generic matrix with entries $\xi'_{ij}$.  If $B$ is integrally closed, then so is 
$B' = k[\xi'_{ij}\mid 1 \le i, j \le n]/(\det(\xi'_{ij}))$, and therefore also $\K[t]/(c(t))=B'[t] = k[\xi_{ij}, t\mid 1 \le i, j \le n]/(\det(\xi'_{ij}))$.

Let $D = k[\eta_{ij}, \mu_{ji}\mid  1\leq i\leq  n, 1\leq j \leq n-1]$. We define the action of $GL_{n-1}$ on $D$ so that  $\sigma\in GL_{n-1}$  acts on an $n \times (n-1)$ matrix by $(a_{ij})^\sigma = (a_{ij})\sigma$,
and on an $(n-1) \times n$ matrix by $(b_{ij})^\sigma = \sigma^{-1}(b_{ij})$.
By the first fundamental theorem for the general linear group (see, e.g., \cite[Section 2.1.]{KP}), the algebra of invariants $D^{GL_{n-1}}$  is generated 
by the entries of the matrix $(\eta_{ij})(\mu_{ij})$. 
These entries parametrize the variety of matrices of rank at most $n-1$. Therefore the algebra $D^{GL_{n-1}}$ is isomorphic to the coordinate ring  $B= k[\xi]/(\det(Y))$ of this variety.
Thus, we can regard $B$ as a subalgebra $D^{GL_{n-1}}$ of $D$.  If $x$ lies in  the field of fractions of $B$
and is integral over $B$, then $x$ lies in the field of fractions of $D$ and is integral
over $D$. But $D$ is a polynomial algebra, so $x \in D$.  Hence
$x \in D^{GL_{n-1}}$, i.e.,
$x$ lies in the invariant subalgebra, namely $B$.
\end{proof}

\section{Commuting traces}\label{sec3}

Let $q:M_n(k)\to M_n(k)$ be the trace of a multilinear map. Identifying $M_n(k)$ with $k^{n^2}$, we can regard $q$ as a homogeneous polynomial map from $k^{n^2}$ to $k^{n^2}$.
The identity $[q(x),x]=0$ for every $x\in M_n(k)$ can be thus interpreted as $[q_\xi,Y]=0$ where  $Y$ is, as above, a  generic matrix, and $q_\xi$ is the matrix in $M_n(k[\xi])$ corresponding to $q$. In the next lemma we
 will show that the centralizer $\Ce(Y)$ of $Y$ in $M_n(\K)$ is trivial. As we shall see,  from this it can be easily derived   that  $q$ is of a standard form. We remark that if $d\ge n$, then 
a standard form $\sum_{i=0}^{d} \mu_i(x)x^i$ can be in 
    $M_n(k)$ written as $\sum_{i=0}^{n-1} \mu'_i(x)x^i$. This follows from the Cayley-Hamilton theorem.


\begin{lemma}\label{cent}
 $ \Ce(Y)= \K[Y]$. 
\end{lemma}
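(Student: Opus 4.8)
The plan is to show that any matrix $Z\in M_n(\K)$ commuting with the generic matrix $Y$ must be a polynomial in $Y$ with coefficients in $\K=k[\xi]$. One inclusion, $\K[Y]\subseteq\Ce(Y)$, is obvious, so the content is the reverse inclusion. First I would pass to the field of fractions $\CC=k(\xi)$: by Lemma~\ref{ch} (together with Lemma~\ref{prehod}), $Y$ has irreducible characteristic polynomial $c(t)$ over $\CC$, hence $Y$ is a cyclic (nonderogatory) matrix over $\CC$, so its centralizer in $M_n(\CC)$ is exactly $\CC[Y]$, which by Lemma~\ref{prehod} has dimension $n$ over $\CC$ with basis $1,Y,\dots,Y^{n-1}$. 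Thus any $Z\in\Ce(Y)\subseteq M_n(\CC)$ can be written uniquely as $Z=\sum_{i=0}^{n-1}\lambda_i Y^i$ with $\lambda_i\in\CC$, and the whole problem reduces to proving that in fact $\lambda_i\in\K$ for all $i$.

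The key idea for the descent from $\CC$ to $\K$ is integrality together with the fact that $\K[Y]$ is integrally closed (Proposition~\ref{cz}). Concretely, the entries of $Z$ lie in $\K$ (since $Z\in M_n(\K)$), so $Z$ is integral over $\K$, being a matrix with entries in the Noetherian ring $\K$ acting on the finite module $\K^n$—more directly, $Z$ satisfies its own characteristic polynomial over $\K$ by Cayley--Hamilton, which is monic with coefficients in $\K\subseteq\K[Y]$. Hence $Z$ is an element of the field of fractions of the domain $\K[Y]$ (here we use Lemma~\ref{ch}/\ref{prehod} to know $\K[Y]$ is a domain and $\CC[Y]$ is its fraction field) that is integral over $\K[Y]$. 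Since $\K[Y]$ is integrally closed by Proposition~\ref{cz}, we conclude $Z\in\K[Y]$, which is the desired statement.

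The main obstacle I anticipate is making the reduction ``$Z\in\CC[Y]$ and $Z$ integral over $\K[Y]$ $\Rightarrow$ $Z\in\K[Y]$'' clean: one must be careful that $\CC[Y]$ really is the full fraction field of $\K[Y]$ (not just contained in it), which is exactly what Lemmas~\ref{ch} and~\ref{prehod} provide—$\K[Y]\cong\K[t]/(c(t))$ with $c(t)$ irreducible over $\CC$, so localizing at $\K\setminus\{0\}$ gives the field $\CC[t]/(c(t))\cong\CC[Y]$. Once that identification is in place, the argument is a formal application of integral closure. A secondary point worth spelling out is why every $Z\in M_n(\K)$ commuting with $Y$ is integral over $\K[Y]$ in the relevant sense: this is immediate since $Z$ has entries in $\K$, hence satisfies a monic polynomial over $\K$, and $\K\subseteq\K[Y]$. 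So the proof is essentially: cyclicity of $Y$ over the fraction field pins down $\Ce(Y)$ up to scalars in $\CC$, and integral closedness of $\K[Y]$ forces those scalars back into $\K$.
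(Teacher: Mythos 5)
Your proof is correct and follows essentially the same route as the paper: identify the centralizer of $Y$ in $M_n(\CC)$ with $\CC[Y]$, note that any element of $\Ce(Y)$ is integral over $\K$ by Cayley--Hamilton, and then use that $\K[Y]$ is integrally closed with fraction field $\CC[Y]$ to descend. The only cosmetic difference is that you justify the centralizer step via irreducibility of $c(t)$ (so $Y$ is nonderogatory), while the paper cites the fact that the generic matrix has $n$ distinct eigenvalues; both are valid.
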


\begin{proof}
Since $Y$ is a generic matrix, it has $n$ distinct eigenvalues (see, e.g., \cite[Lemma 1.3.12]{Row}). Its centralizer in $M_n(\CC)$ is therefore the $\CC$-linear span of its powers. Thus, given  $a\in \Ce(Y)$, we have $a\in \CC[Y]$. 
By Lemma \ref{prehod}, 
$\K[Y]$ is integrally closed. Since $a$ is integral over $\K$ by the Cayley-Hamilton theorem and  $\CC[Y]$ is the field of fractions of $\K[Y]$, $a$ thus belongs to $\K[Y]$. This shows that 
 $ \Ce(Y)\subseteq \K[Y]$. The converse inclusion is trivial. 
 \end{proof}

\begin{lemma}\label{comtr}
Every commuting trace of a multilinear map $q:M_n(k)\to M_n(k)$  is of a standard form.
\end{lemma}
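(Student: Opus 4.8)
The plan is to read the structure of $q_\xi$ off Lemma \ref{cent} and then bookkeep homogeneous degrees. Since $q$ is the trace of a $d$-linear map, regarded as a polynomial map $k^{n^2}\to k^{n^2}$ it is homogeneous of degree $d$, so every entry of the matrix $q_\xi\in M_n(\K)$ is a homogeneous polynomial of degree $d$ in the $\xi_{ij}$. The hypothesis $[q(x),x]=0$ for all $x\in M_n(k)$ is exactly the statement $[q_\xi,Y]=0$, i.e.\ $q_\xi\in\Ce(Y)$. Hence by Lemma \ref{cent} and the Cayley--Hamilton theorem (which gives $\K[Y]=\sum_{i=0}^{n-1}\K Y^i$) there are $P_0,\dots,P_{n-1}\in\K$ with
\[
q_\xi=\sum_{i=0}^{n-1}P_i(\xi)\,Y^i .
\]

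Next I would extract the relevant homogeneous parts. As $Y$ has $n$ distinct eigenvalues, its minimal polynomial over $\CC$ has degree $n$, so $1,Y,\dots,Y^{n-1}$ are $\CC$-linearly independent, hence $\K$-linearly independent; in particular the representation above is unique and every $\K$-linear relation among the $Y^i$ is trivial. Writing $P_i=\sum_{e\ge 0}P_i^{(e)}$ as the sum of its homogeneous components (with $P_i^{(e)}=0$ for $e<0$) and comparing, on both sides of the displayed identity, the homogeneous component of a given total degree $m$, one gets $\sum_{i=0}^{n-1}P_i^{(m-i)}Y^i=0$ whenever $m\ne d$, so $P_i^{(m-i)}=0$ for all such $m$. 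Thus each $P_i$ equals its degree-$(d-i)$ part, in particular $P_i=0$ when $i>d$, and
\[
q_\xi=\sum_{i=0}^{\min(d,\,n-1)}P_i(\xi)\,Y^i .
\]

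Finally, each $P_i$, being a homogeneous polynomial of degree $d-i$ in the $n^2$ variables $\xi_{ij}$, defines via polarization (this is where $\cha(k)=0$ enters) a function $\mu_i:M_n(k)\to k$ that is the trace of a symmetric $(d-i)$-linear map; padding with $\mu_i=0$ for $n\le i\le d$ when $d\ge n$, we have traces of $(d-i)$-linear maps $\mu_i$ for $0\le i\le d$. Applying the $k$-algebra homomorphism $M_n(\K)\to M_n(k)$ induced by $\xi_{ij}\mapsto x_{ij}$ — which carries $q_\xi$ to $q(x)$ and $Y$ to $x$ — to the last display yields $q(x)=\sum_{i=0}^{d}\mu_i(x)x^i$ for every $x\in M_n(k)$, i.e.\ $q$ is of a standard form.

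The only genuine input is Lemma \ref{cent} (ultimately Proposition \ref{cz}); I expect no real obstacle beyond it. The two points needing a little care are the degree comparison — which relies on the $\CC$-independence of $1,Y,\dots,Y^{n-1}$, i.e.\ on $Y$ being generic — and the characteristic-zero polarization that realises the scalar coefficients $P_i$ as traces of multilinear maps $M_n(k)^{d-i}\to k$ rather than merely as polynomial functions.
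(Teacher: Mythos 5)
Your proposal is correct and follows essentially the same route as the paper: identify $q_\xi$ as an element of $\Ce(Y)=\K[Y]$ via Lemma \ref{cent}, write it as $\sum_i p_i Y^i$, and use homogeneity of degree $d$ to conclude $p_i$ is homogeneous of degree $d-i$, hence (by polarization in characteristic $0$) the trace of a $(d-i)$-linear central map. Your explicit degree comparison via the $\K$-linear independence of $1,Y,\dots,Y^{n-1}$ simply spells out what the paper states in one line.
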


\begin{proof}
Since $q_\xi\in   \Ce(Y)$, Lemma \ref{cent} shows that  $q_\xi=\sum_{i=0}^{n-1} p_i(\xi_{11},\xi_{12},\dots,\xi_{nn})Y^i$  for some polynomials $p_i\in k[\xi]$. Since $q$ is the trace of a $d$-linear map  (for some $d\in \N$), $q_\xi$ is homogeneous of degree $d$ and therefore $\deg(p_i)=d-i$ (if $d<i$, then we  simply take $p_i=0$). Note that this is just another way of saying that $p_i$ is the trace of a $(d-i)$-linear central map. Hence $q$ is of a standard form.
\end{proof}

In the proof of the next theorem we combine Lemma \ref{comtr} with results from \cite{LLWW}. 
\begin{theorem}\label{krce}
If $A$ is a centrally closed prime $k$-algebra and $q:A\to A$ is a commuting trace of a multilinear map, then $q$ is of a standard form.
\end{theorem}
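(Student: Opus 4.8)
The plan is to reduce the general centrally closed prime algebra case to the two cases that are already settled: the infinite-dimensional case (and the high-dimensional finite case) handled by Lee, Lin, Wang, and Wong in \cite{LLWW}, and the low-dimensional case, which by the structure theory of centrally closed prime PI-algebras means a full matrix algebra over $k$, handled by Lemma \ref{comtr}. First I would recall that a centrally closed prime algebra $A$ with center (extended centroid) $k$ either does not satisfy a polynomial identity of degree $\le 2d$, or it does. In the former case the hypotheses of the Lee--Lin--Wang--Wong theorem are met (we are in characteristic $0$), and their result gives immediately that $q$ is of the standard form with coefficients $\mu_i$ that are traces of $(d-i)$-linear maps into $k$; there is nothing more to do.

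In the remaining case, $A$ satisfies a polynomial identity, so by Kaplansky's theorem a centrally closed prime PI-algebra over $k$ is a central simple algebra, finite-dimensional over $k$; since $k$ is the extended centroid and we may pass to a splitting field — or rather, since the standard-form conclusion is insensitive to scalar extension by the multilinearity of $M$ and $\mu_i$ — I would argue it suffices to treat $A = M_n(k)$ for the appropriate $n$ (with $n \le d$, though the exact bound is not needed). The argument for passing from a central simple algebra to the split form $M_n(\bar k)$ needs a small remark: if $q$ is the trace of a $d$-linear map $M$ on $A$, then $q\otimes \bar k$ is the trace of $M\otimes\bar k$ on $A\otimes_k\bar k\cong M_n(\bar k)$, the commuting identity is preserved, and a standard form over $M_n(\bar k)$ restricts back to one over $A$ by a dimension/uniqueness count (the $\mu_i$ are uniquely determined by $q$ via evaluation at elements with distinct eigenvalues, hence Galois-invariant, hence defined over $k$). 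Granting this, Lemma \ref{comtr} finishes the matrix case.

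The step I expect to be the main obstacle is precisely this descent from the central simple algebra to $M_n(k)$ — making sure that the coefficient maps $\mu_i$ obtained after base change are genuinely $k$-valued traces of $k$-multilinear maps, not merely $\bar k$-valued. The cleanest route is to observe that on a Zariski-dense set of $x$ (those with $n$ distinct eigenvalues in $\bar k$), the equation $q(x) = \sum_{i=0}^{n-1}\mu_i(x)x^i$ has a unique solution for the scalars $\mu_i(x)$, given by a Vandermonde-type formula in the eigenvalues; uniqueness forces the $\mu_i$ produced over $\bar k$ to coincide with their Galois conjugates, hence to take values in $k$, and then polynomiality/multilinearity in $x$ is automatic from the construction in Lemma \ref{comtr}. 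Once this bookkeeping is in place, the proof is just the case division described above, and I would write it in two short paragraphs: one invoking \cite{LLWW} in the non-PI case, one invoking Kaplansky's theorem plus Lemma \ref{comtr} in the PI case.
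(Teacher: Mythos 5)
Your proposal follows essentially the same route as the paper's proof: invoke the Lee--Lin--Wang--Wong theorem to reduce to the case where $A$ satisfies a standard identity of low degree, observe that a centrally closed prime PI-algebra over $k$ is finite-dimensional central simple, extend scalars to a splitting field so that the extended trace is a commuting trace on $M_n(K)$, apply Lemma \ref{comtr}, and then descend the central coefficients to $k$. The only genuine divergence is how the descent is justified. The paper quotes \cite[Theorem 1.4]{LLWW} to know that $q(a)$ lies in the $k$-span of $1,a,\dots,a^{n-1}$, produces an element $a\in A$ with minimal polynomial of degree $n$ via the Capelli polynomial, and spreads $\tilde{\mu}_i(a)\in k$ to all of $A$ by a Vandermonde argument; you instead use uniqueness of the coefficients at elements with $n$ distinct eigenvalues plus Galois invariance, followed by Zariski density and polynomiality. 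Your variant is sound in characteristic $0$, but the two points you call ``automatic'' do need (short) arguments: (i) one must know that $A$ itself, not just $A\otimes_k \bar{k}$, contains elements whose minimal polynomial has degree $n$ --- this is precisely what the paper's Capelli-polynomial step supplies, though it also follows from $k$ being infinite and the condition being a nonempty Zariski-open one; and (ii) passing from ``$\tilde{\mu}_i$ takes values in $k$ on a dense set of $k$-points'' to ``$\tilde{\mu}_i$ restricts to the trace of a $k$-valued $(d-i)$-linear map on $A$'' requires the observation (easy, since $k$ is infinite) that a polynomial with coefficients in $K$ which is $k$-valued on such a set has coefficients in $k$; this is the role played by the paper's Vandermonde argument. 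Two further small remarks: the LLWW hypothesis is the failure of the standard identity $S_{2d}$ rather than of an arbitrary identity of degree $\le 2d$ (harmless here), and your Galois step can be replaced by a plain uniqueness-of-coordinates argument (extend $1,a,\dots,a^{n-1}$ to a $k$-basis of $A$ and compare expansions of $q(a)\otimes 1$), which avoids any reliance on the splitting field being Galois over $k$.
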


\begin{proof}  Let $M:A^d\to A$ be $d$-linear map such that $q(x)=M(x,\ldots,x)$, $x\in A$. In light of  \cite[Theorem 3.1]{LLWW} we may assume that 
 $A$ satisfies the standard polynomial identity $S_{2d}$. Since $A$ is assumed to be centrally closed, this simply  means that  $A$ is a  central simple algebra over  $k$ of dimension at most $d^2$ (see, e.g., \cite[Theorem C.2]{FI}).
Let $K\supseteq k$ be a splitting field of $A$. The scalar extension
$A_K=A\otimes_k K$ of $A$ to $K$ is then isomorphic to $M_n(K)$ for some $n\le d$.
A standard argument shows that  $M$ can be extended to  a $d$-linear (with respect to $K$)  map $\tilde{M}:A_K^d\to A_K$ (so that $\tilde{M}(x_1\otimes 1,\ldots,x_d\otimes 1) = M(x_1,\ldots,x_d)\otimes 1$).
 Let  $\tilde{q}(x):=\B(x,\ldots,x)$ be its trace. 
  It is easy to verify that $\tilde{q}$ is commuting, and thus, by Lemma  \ref{comtr}, of a standard form $\tilde{q}(x)=\sum_{i=0}^{n-1} \tilde{\mu}_i(x)x^i$, where $\tilde{\mu}_i$ is the trace of a $(d-i)$-linear map from $A_K$ to $K$ (i.e., a homogeneous polynomial of degree $d-i$). 
It remains to show that $\tilde{\mu}_i(A\otimes 1)\subseteq k$ to conclude that the restriction $M$ of $\B$ to $A\otimes 1$ is of a standard form.  
By \cite[Theorem 1.4]{LLWW}, $q(x)$ lies in the $k$-linear span of $1,x,\dots,x^{n-1}$. If the minimal polynomial of $a\in A$ has degree $n$, then it follows that $\tilde{\mu}_i(a)\in k$. 
We now follow the argument from the proof of  \cite[Theorem 3.1.49]{Row}. Let $h(x_1,\dots,x_n)=C_{2n-1}(1,x_1,\dots,x_1^{n-1},x_2,\dots,x_n)$, where $C_{2n-1}$ is the Capelli polynomial. If $h(a,x_2,\dots,x_{n})$ does not vanish on $A$, then the minimal polynomial of $a$ has degree $n$ (see, e.g., \cite[Theorem 1.4.34]{Row}),  and so in this case $\tilde{\mu}_i(a)\in k$.
Since $h$ is not an identity of $A$ (see, e.g., \cite[Proposition 3.1.6]{Row}), there exists $a\in A$ such that $h(a,x_2,\dots,x_n)$ is not a generalized polynomial identity. Take  $b\in A$. A standard Vandermonde argument shows that $h(a+\alpha b,x_2,\dots,x_n)$ is a generalized polynomial identity for only 
finitely many $\alpha\in k$. Therefore $\tilde{\mu}_i(a+\alpha b)\in k$ for infinitely many $\alpha\in k$,  and hence, again by a Vandermonde argument,  $\tilde{\mu}_i(b)\in k$.
\end{proof}

\section{Reduction to the commuting trace case} \label{s4}

The goal of this section is to prove a lemma which will be used as an essential tool for reducing the study of general functional identities of one variable to 
 commuting traces of multilinear maps. It considers general unital algebras,  so perhaps it could be  of use elsewhere.

Let $A$ be a unital algebra over  $k$. If $q:A\to A$ is the trace of a $d$-linear map $M:A^d\to A$, then we may assume that $M$ is symmetric (otherwise we replace it by the map $ \frac{1}{d!} \sum_{\sigma\in S_d} M(x_{\sigma(1)},\ldots,x_{\sigma(d)})$).  Then $M$ is unique, which makes it possible for us to define  $\partial q:A\to A$ by 
$$
\partial q(x) = M(x,\ldots,x,1).
$$
 Obviously, $\partial q$ is the trace of the symmetric $(d-1)$-linear map
$M(x_1,\ldots,x_{d-1},1)$.
Alternatively, one can introduce $\partial q$ by avoiding $M$ as follows. First notice that $q(x+\lambda)$, where $x\in A$ and $\lambda \in k$, is a polynomial in $\lambda$ with coefficients in $A$ of degree at most $d$. The coefficient at $\lambda$ is equal to  $d\partial q(x)$. In fact, $q(x+\lambda) = q(x) +  d \partial q(x) \lambda + \cdots +q(1) \lambda^d$ (and therefore, 
in a suitable setting, we have $\partial q(x) = \frac{1}{d}\lim_{\lambda\to 0}\frac{q(x+\lambda) - q(x)}{\lambda}$).
Using this alternative definition, one easily checks that if  $q'$ is another trace of, say,  $d'$-linear map, then we have 
\begin{equation}\label{od}
\partial \bigl(q(x)\cdot q'(x)\bigr) =\frac{d}{d+d'}\partial q(x)\cdot q'(x) + \frac{d'}{d+d'}q(x)\cdot   \partial q'(x).
\end{equation}
 
 In the next lemma we consider the situation where the traces of $d$-linear maps $q$ and $r$ satisfy $[q(x)x- xr(x),x]=0$, in other words, the situation where  $x\mapsto q(x)x - xr(x)$ is a commuting map.  We are actually interested in the special case where $q(x)x - xr(x)\in k$. However,  
 the proof runs more smoothly if we consider a slightly more general situation.

\begin{lemma}\label{L2}
Let $A$ be a unital $k$-algebra and
let $q,r:A\to A$  be the traces of $d$-linear maps. If $[q(x)x- xr(x),x]=0$ for all $x\in A$, then  there exists the trace of a $(d-1)$-linear map $p:A\to A$ such that 
$[[q(x) - xp(x),x],x]=0$ for all $x\in A$.
\end{lemma}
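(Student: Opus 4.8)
The plan is to start from the hypothesis $[q(x)x-xr(x),x]=0$ and apply the operator $\partial$ from the preceding discussion. Since $x\mapsto q(x)x-xr(x)$ is the trace of a $(d+1)$-linear map and is commuting, one may try to differentiate the identity $[q(x)x-xr(x),x]=0$ (a commuting-type identity for a degree $d+1$ trace) to obtain a lower-degree relation. Concretely, I would plug $x+\lambda$ into $[q(x)x-xr(x),x]=0$, expand in powers of $\lambda\in k$, and read off the coefficient of $\lambda$. Using $q(x+\lambda)=q(x)+d\,\partial q(x)\lambda+\cdots$, $r(x+\lambda)=r(x)+d\,\partial r(x)\lambda+\cdots$, and $(x+\lambda)=x+\lambda\cdot 1$, the coefficient of $\lambda^1$ in $[(q(x+\lambda))(x+\lambda)-(x+\lambda)(r(x+\lambda)),\,x+\lambda]$ gives a relation of the shape
\[
\bigl[d\,\partial q(x)\,x+q(x)-x\,d\,\partial r(x)-r(x),\,x\bigr]+\bigl[q(x)x-xr(x),1\bigr]=0,
\]
and the last bracket vanishes. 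So I get $[\,d\,\partial q(x)\,x - x\,d\,\partial r(x) + q(x)-r(x),\,x]=0$, i.e. $[q(x)-r(x),x] = -d[\partial q(x)x - x\partial r(x),x]$.

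Next I would extract a candidate for $p$. The natural guess is $p := \partial q$ (the trace of a symmetric $(d-1)$-linear map, by construction). With this choice, $q(x)-xp(x) = q(x)-x\partial q(x)$, and I need $[[q(x)-x\partial q(x),x],x]=0$. Since $[q(x)x-xr(x),x]=0$ already, it suffices to control $[\,[q(x),x] - [x\partial q(x),x]\,,x\,]$, and one would combine this with the $\lambda^1$-identity above and, if needed, also the $\lambda^2$-coefficient of the original identity, to eliminate $r$. Another potentially useful relation comes from applying $\partial$ directly: from $[q(x)x,x] = [xr(x),x]$ one has $[q(x)x,x]-[xr(x),x]=0$; multiplying through by $x$ on the left and right and re-expanding should produce the doubly-iterated bracket. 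The key algebraic identity I expect to use repeatedly is that $[a x - x b, x]=0$ implies $[axx - xbx, x] = [ax,x]x - x[bx,x] = \cdots$, letting one pass between $[\cdot,x]$ applied to degree-$(d+1)$ and degree-$(d+2)$ traces.

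The main obstacle will be the bookkeeping of which linear combination of the $\lambda$-coefficients (degrees $0,1,2$) of $[q(x)x-xr(x),x]=0$, together with formula \eqref{od} for $\partial$ of a product, actually yields the clean statement $[[q(x)-xp(x),x],x]=0$ with $p$ the trace of a $(d-1)$-linear map and \emph{no residual dependence on $r$}. In particular one must check that the $x\partial r(x)$ term can be absorbed: since $[q(x)x-xr(x),x]=0$ gives $xr(x)x = x q(x) x - [\text{something}]$... more carefully, one rewrites $r$ in terms of $q$ modulo centralizing-$x$ terms and verifies that those terms drop out after the outer bracket $[\,\cdot\,,x]$. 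I expect that setting $p=\partial q$ works and that the verification reduces, after expanding, to an identity that holds formally in any unital algebra; the risk is that one instead needs $p$ to be a suitable combination like $\frac{d}{d+1}\partial q - \frac{1}{d+1}(\text{something involving }r)$, in which case one must separately argue this combination is still the trace of a $(d-1)$-linear map (which it is, being a linear combination of such traces). Either way, the proof is a finite computation with the $\partial$-calculus already set up, with no prime-ring or dimension hypotheses needed.
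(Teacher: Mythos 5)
There is a genuine gap: your primary candidate $p=\partial q$ is false, and the plan of combining only the $\lambda^1$ (and perhaps $\lambda^2$) coefficients cannot ``eliminate $r$''. Concretely, take $d=2$, fix a noncentral element $a$, and set $q(x)=xax$, $r(x)=ax^2$. Then $q(x)x-xr(x)=0$ identically, so the hypothesis holds, while $\partial q(x)=\tfrac12(ax+xa)$ gives $q(x)-x\partial q(x)=\tfrac12\,x[a,x]$, and $[[x[a,x],x],x]\ne 0$ in general: in $M_2(k)$ with $x=E_{11}$, $a=E_{12}$ the triple bracket equals $-\tfrac12 E_{12}$. So no choice built from $q$ by a single differentiation can work; in this example the correct choice is $p(x)=ax$, which comes from $r$, and there is no ``residual-free'' elimination of $r$ --- the $p$ in the conclusion genuinely depends on $r$.

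The paper's proof shows what is actually required. Your coefficient-of-$\lambda$ relation is exactly the $t=0$ case of \eqref{vi2}, but one must iterate $\partial$ all the way, obtaining \eqref{vi2} for every $t\le d-1$, and then prove by induction the telescoping identity \eqref{g2}, in which the error term $(-1)^{t-1}\binom{d}{t}x(q_t-r_t)(x)x^t$ is pushed up one order at each step. The induction closes only at $t=d$, using that $q_d(x)=q(1)$ and $r_d(x)=r(1)$ satisfy $[q(1)-r(1),x]=0$ for all $x$ (obtained by applying $\partial$ once more to the $t=d-1$ case). One is then left, modulo commuting maps, with $[q(x),x]\equiv\sum_{i=1}^d(-1)^{i-1}\binom{d}{i}x[r_i(x),x]x^{i-1}=[xp(x),x]$, where $r_i=\partial^i r$ and $p(x)=\sum_{i=1}^d(-1)^{i-1}\binom{d}{i}r_i(x)x^{i-1}$ is the trace of a $(d-1)$-linear map. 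Thus $p$ is assembled from \emph{all} $d$ iterated derivatives of $r$ (not of $q$) combined with powers of $x$, and no bounded number of $\lambda$-coefficients suffices for general $d$. Your opening step coincides with the paper's, but the heart of the argument --- the full induction and the identification of the correct $p$ --- is missing, and the fallback guess you mention is likewise not of the right shape.
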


\begin{proof}
We set $q_0 = q$ and $q_t = \partial q_{t-1}$, $t=1,\ldots,d$. If $M$ is as above, then we have $q_t(x) = M(x,\ldots,x,1,\ldots,1)$ where $x$ appears $d-t$ times and $1$ appears $t$ times. 
 Analogously we introduce $r_t$, $t=0,\ldots, d$. For the traces of  multilinear maps $f$ and $g$ we will write $f(x)\equiv g(x)$ if $[f(x)-g(x),x]=0$ for every $x\in A$ (i.e., $f-g$ is commuting). 
 In this case we also have  $\partial f(x)\equiv \partial g(x)$.  Indeed, this follows immediately by applying \eqref{od} to $f(x)x-g(x)x=xf(x)-xg(x)$.
 Accordingly, 
 $q(x)x\equiv xr(x)$ implies $dq_1(x)x + q_0(x) \equiv dxr_1(x)  + r_0(x)$, and, furthemore, by induction one easily checks that
 \begin{equation}\label{vi2}
 (d-t)q_{t+1}(x)x + (t+1)q_t(x)   \equiv (d-t)xr_{t+1}(x) +  (t+1)r_t(x) 
\end{equation}
for $t=0,1,\ldots,d-1$. Next, we claim that
 \begin{equation}\label{g2}
 [q(x),x] \equiv (-1)^{t-1}\binom{d}{t}x(q_t - r_t)(x)x^t + \sum_{i=1}^t (-1)^{i-1}\binom{d}{i}x[r_i(x),x]x^{i-1} 
 \end{equation}
 for $t=1,\ldots,d$. Indeed, from $q(x)x\equiv xr(x)$ we get $[q(x),x] \equiv x(r-q)(x)$, and hence, using \eqref{vi2} for $t=0$, $[q(x),x] \equiv dx\bigl(q_1(x)x-xr_1(x)\bigr)$. Consequently,
 $$[q(x),x] \equiv dx (q_1-r_1)(x)x  + dx[r_1(x),x],$$
  which proves \eqref{g2} for $t=1$. Now assume  that $1 < t < d$. Using  \eqref{vi2} we obtain
 \begin{eqnarray*}
 &&(-1)^{t-1}\binom{d}{t}x(q_t - r_t)(x)x^t \\
  &\equiv& (-1)^{t-1}\binom{d}{t}\frac{d-t}{t+1} x \bigl(xr_{t+1}(x) - q_{t+1}(x)x\bigr)x^t\\
  &=&  (-1)^{t}\binom{d}{t+1}x[r_{t+1}(x),x]x^t + (-1)^{t}\binom{d}{t+1} x(q_{t+1} - r_{t+1})(x)x^{t+1}.
 \end{eqnarray*} 
This readily implies \eqref{g2} for any $t$.

Note that $q_d(x) = q(1)$ and $r_d(x) = r(1)$. Thus, the $t=d-1$ case of \eqref{vi2} reads as $q(1)x + dq_{d-1}(x) \equiv xr(1) + dr_{d-1}(x)$. Substituting $1$ for $x$ in this relation we obtain
$(d+1)q(1)\equiv (d+1)r(1)$. That is, $q_d(x)\equiv r_d(x)$, and so the $t=d$ case of \eqref{g2} reduces to 
$$
 [q(x),x] \equiv \sum_{i=1}^d (-1)^{i-1}\binom{d}{i}x[r_i(x),x]x^{i-1}.
$$
Since $x[r_i(x),x]x^{i-1} = [xr_i(x)x^{i-1},x]$, we can rewrite this as  
$$
 [q(x),x] \equiv \Bigl[\sum_{i=1}^d (-1)^{i-1}\binom{d}{i}xr_i(x)x^{i-1},x\Bigr]. 
$$
Thus, 
$$
p(x) := \sum_{i=1}^d (-1)^{i-1}\binom{d}{i}r_i(x)x^{i-1}
$$
satisfies $[[q(x) - xp(x),x],x] =0$. Note also that $p$ is indeed the trace of a $(d-1)$-linear map.
\end{proof}

\section{An Engel condition with traces of multilinear maps}

Lemma \ref{L2} gives rise to the question what can be said about  the trace of a multilinear map $s$ if it satisfies $[[s(x),x],x]=0$. Fortunately, an answer comes  easily if we combine our arguments with  an old lemma by Jacobson \cite{Jac} saying that  if $a,b\in M_n(k)$ are such that $[[b,a],a]=0$, then $[b,a]$ is nilpotent.

\begin{proposition}\label{pj} Let $A$ be a centrally closed prime $k$-algebra.
If $s:A\to A$ is the trace of a multinear map such that $[[s(x),x],x]=0$ for all $x\in A$, then $[s(x),x]=0$ for all $x\in  A$ (and hence $s$ is of a standard form
$s(x)= \sum_i \mu_i(x)x^i$).
\end{proposition}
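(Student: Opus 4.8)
The strategy is to reduce to the case $A = M_n(k)$ and then pass to generic matrices, just as in Section \ref{sec3}. First I would dispose of the reduction step: by Theorem \ref{krce} (or rather the underlying dichotomy from \cite{LLWW}), if $A$ does not satisfy $S_{2d}$ then the conclusion $[s(x),x]=0$ should follow from known Engel-type results on prime rings; the genuinely new case is when $A$ is a central simple algebra of dimension $\le d^2$, which after a splitting-field extension becomes $M_n(K)$. As in the proof of Theorem \ref{krce}, one extends the $d$-linear map defining $s$ to $M_n(K)$ and checks that the extended trace $\tilde s$ still satisfies the Engel condition $[[\tilde s(x),x],x]=0$; if we prove $[\tilde s(x),x]=0$ on $M_n(K)$, then restricting to $A\otimes 1$ gives $[s(x),x]=0$, and Theorem \ref{krce} then yields the standard form. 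So it suffices to treat $A = M_n(k)$.

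\textbf{The matrix algebra case.} Let $s : M_n(k)\to M_n(k)$ be the trace of a $d$-linear map, and let $s_\xi \in M_n(k[\xi])$ be the corresponding generic matrix, so the hypothesis becomes $[[s_\xi, Y],Y]=0$ in $M_n(k[\xi])$. Here $k[\xi]\subseteq \CC = k(\xi)$, and $\CC$ is a field over which $Y$ has $n$ distinct eigenvalues (a generic matrix), hence is diagonalizable. Jacobson's lemma \cite{Jac} — applied over the field $\CC$, or over an algebraic closure — tells us that $[s_\xi, Y]$ is a nilpotent matrix. But now the key observation: since $Y$ is diagonalizable over the splitting field, writing everything in an eigenbasis, the commutator $[s_\xi,Y]$ has zero diagonal entries in that basis, and more precisely $[[s_\xi,Y],Y]=0$ forces each off-diagonal entry $([s_\xi,Y])_{ij}$ to be multiplied by $(\lambda_i-\lambda_j)^2$ and vanish, so $[s_\xi,Y]$ commutes with $Y$; combined with nilpotency and the fact that the centralizer of a matrix with distinct eigenvalues contains no nonzero nilpotents, we get $[s_\xi,Y]=0$. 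Equivalently, and more cleanly: a nilpotent element of $\Ce(Y)$ over $\CC$ must be $0$ since $\Ce(Y) = \CC[Y]$ is a field by Lemma \ref{ch}. Thus $[s_\xi,Y]=0$, which is exactly $[s(x),x]=0$ for all $x\in M_n(k)$, and then Lemma \ref{comtr} gives the standard form.

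\textbf{Main obstacle.} The routine parts are the scalar-extension bookkeeping and the Vandermonde/density arguments needed to descend from $M_n(K)$ back to $A$ (these mirror Theorem \ref{krce} almost verbatim). The one point requiring care is the passage from ``$[s_\xi,Y]$ is nilpotent'' to ``$[s_\xi,Y]=0$'': Jacobson's lemma is stated for matrices over a field, and $s_\xi$ has entries in the ring $k[\xi]$, not a field — so I would apply it after base-changing to $\CC = k(\xi)$ (or its algebraic closure), where $[[s_\xi,Y],Y]=0$ still holds, concluding $[s_\xi,Y]$ is nilpotent there; then the identity $\Ce(Y)=\CC[Y]$ from the proof of Lemma \ref{cent} (a field, since $c(t)$ is irreducible by Lemma \ref{ch}) shows that the only nilpotent centralizing element is $0$. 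Once this is in hand everything else is formal.
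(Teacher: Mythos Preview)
Your proposal is correct and follows essentially the same line as the paper: reduce to $M_n(k)$ by scalar extension, observe that $\hat s_\xi:=[s_\xi,Y]$ is nilpotent (Jacobson) and lies in the centralizer of $Y$, and conclude $\hat s_\xi=0$ because that centralizer has no nonzero nilpotents. The only cosmetic difference is that the paper works over $k[\xi]$ and invokes the integral domain $k[\xi][Y]$ (via Lemmas \ref{ch}--\ref{prehod}), whereas you pass to $k(\xi)$ and use that $k(\xi)[Y]$ is a field; your route is marginally lighter (it bypasses integral closedness), and your diagonalization remark in fact already gives $[s_\xi,Y]=0$ directly, making the appeal to Jacobson redundant there.
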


\begin{proof}
If $A$ is infinite dimensional, then this  follows easily from the general theory of functional identities \cite{FI}; more explicitly,  it is a special case of \cite[Theorem 4.6]{Bei}. We may therefore assume that $A$ is finite dimensional, in which case it is a central simple algebra. A standard scalar extension argument shows that with no loss of generality we may assume that $A=M_n(k)$ (cf. the proof of Theorem \ref{krce}). Jacobson's lemma \cite[Lemma 2]{Jac} implies that $\hat{s}(x):= [s(x),x]$ is nilpotent for every $x\in A$.
As in Section \ref{sec3}, we may 
identify $\hat{s}$ with an element of $\K[Y]$. Since, by Lemmas \ref{ch} and \ref{prehod}, $\K[Y]$ is an integral domain, it does not contain nonzero nilpotents. 
Hence $\hat{s} =0$.
\end{proof}

In a seemingly more general, but in fact equivalent way we could formulate Proposition \ref{pj} so that the condition $[[s(x),x],x]=0$ is replaced by an Engel condition $[\ldots[[s(x),x],x],\ldots,x]=0$ (the number of brackets is arbitrary, but fixed). We remark that such a condition has been studied extensively  in the case where $s$ is a derivation or a related map (see, e.g., \cite{Lan}), and also in the case where $s$ is a general additive map \cite{BFLW, Bre95}. Proposition \ref{pj} thus takes a step further in this line of investigation.

\section{A functional identity determined by the adjugation}

Let $A$ be a central simple $k$-algebra with $\dim A = n ^2$. We define the trace of an element $a\in A$ as the trace of $a\tnz 1\in A\tnz K\iso M_n(K)$, where $K$ is a splitting field of $A$. 
It can be shown that the trace of $a$  belongs to $k$ and that this definition is independent of the choice of $K$ \cite[Theorem 3.1.49]{Row}. Hence,
 by using the fact that the coefficients of the characteristic 
polynomial of a matrix can be expressed through the traces of the powers of this matrix, one can also define  adj$(x)$, the {\em adjugate} of $x$. The map $x\mapsto {\rm adj}(x)$ is the trace of an $(n-1)$-linear map, in fact 
${\rm adj}(x) =   \sum_{i=0}^{n-1} \tau_i(x)x^i$ where $\tau_i$ is the trace of an $(n-1-i)$-linear map from $A$ into $k$. Just as for matrices, we have
$$
{\rm adj}(x^m) = {\rm adj}(x)^m
$$
and
$$
x{\rm adj}(x) = {\rm adj}(x)x = \det(x) \in k,
$$
where $\det(x)$ can be also defined through the traces of powers of $x$. The goal of this section is to show that the functional identity   $x^mq(x)\in k$  always arises from the adjugation. 

\begin{proposition}\label{thadj}
Let $A$ be a centrally closed prime  $k$-algebra, let $q\ne 0$ be the trace of a $d$-linear map, and let $m\in\N$. If $x^mq(x)\in k$ for all $x\in A$, then $A$ is finite dimensional. Moreover, if $\dim A = n^2$, then  $d\ge m(n-1)$ and there exists the trace of a  $(d-m(n-1))$-linear map $\lambda :A\to k$,  $\lambda\ne 0$, such that
$q(x)=\lambda(x){\rm adj}(x^m)$ for all $x\in A$. (In particular,   $x^mq(x)\ne 0$ for some $x\in A$.)
\end{proposition}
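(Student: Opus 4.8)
The plan is to dispose first of the infinite dimensional case, then to reduce the finite dimensional case to $A=M_n(k)$ and settle it with the generic matrix technique of Sections \ref{s2}--\ref{sec3}. If $A$ is infinite dimensional, I would show that no such nonzero $q$ can exist, so that this case does not occur and $A$ must be finite dimensional: this is the point at which I would invoke the general theory of functional identities, since an infinite dimensional centrally closed prime algebra is ``free enough'' that the identity $x^mq(x)\in k$ (which, after linearization, is a generalized polynomial identity built from the trace of a multilinear map) admits only the solution $q=0$; cf.\ \cite{FI}, in the spirit of the proof of Proposition \ref{pj}. Thus $A$ is finite dimensional, hence central simple, say $\dim_kA=n^2$. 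As in the proof of Theorem \ref{krce} I would pass to a splitting field $K\supseteq k$, extend $M$ to a $K$-$d$-linear map $\B$ on $A_K\iso M_n(K)$ with trace $\tilde q$, and observe (by $K$-multilinearity, since $A\tnz1$ spans $A_K$) that $x^m\tilde q(x)\in K$ for all $x\in A_K$; so it suffices to treat $q\colon M_n(k)\to M_n(k)$, and I reinstate the letter $k$ for $K$. First note $q$ is commuting: for invertible $x$ the element $x^mq(x)$ is central, so it commutes with $x$, giving $x^{m+1}q(x)=x^mq(x)x$, and cancelling $x^m$ yields $[q(x),x]=0$; since both sides are polynomial maps, this extends to all of $M_n(k)$. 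By Lemma \ref{cent}, $q_\xi\in\Ce(Y)=\K[Y]=:R$, and the hypothesis becomes $Y^mq_\xi=\mu_\xi$ in $R$, where $\mu_\xi\in\K$ is the polynomial (homogeneous of degree $d+m$) corresponding to $x\mapsto x^mq(x)$; note $q_\xi\neq0$. We also have $\adj(Y)\in\Ce(Y)=R$ and $Y\cdot\adj(Y)=\det(Y)$.

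The heart of the argument is to prove that $\det(Y)^m$ divides $\mu_\xi$ in $\K$. Granting this, write $\mu_\xi=\det(Y)^m\lambda_\xi$ with $\lambda_\xi\in\K$; then $q_\xi Y^m=\det(Y)^m\lambda_\xi=Y^m\adj(Y)^m\lambda_\xi$ in the domain $R$ (Lemmas \ref{ch} and \ref{prehod}), and cancelling $Y^m$ gives $q_\xi=\lambda_\xi\adj(Y)^m=\lambda_\xi\,\adj(Y^m)$. To establish the divisibility I would localize $\K$ at the prime $(\det Y)$ (it is prime since $\det Y$ is irreducible, as used in Lemma \ref{ch}), obtaining a discrete valuation ring $\mathcal O$ with uniformizer $\pi=\det Y$. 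Then $D:=R\tnz_\K\mathcal O\iso\mathcal O[t]/(c(t))$ is a one dimensional integrally closed Noetherian domain (Proposition \ref{cz}), hence a semilocal Dedekind domain; in $D$ the ideal $(Y)$ is maximal, because $D/(Y)\iso\mathcal O/(c(0))=\mathcal O/(\pi)$ is a field, so $Y$ is a uniformizer at $(Y)$ and $v_{(Y)}(Y)=1$. Moreover $\adj(Y)=(-1)^{n+1}(Y^{n-1}+c_{n-1}Y^{n-2}+\cdots+c_1)\equiv(-1)^{n+1}c_1\pmod{(Y)}$, and the linear coefficient $c_1$ of $c(t)$, being a nonzero polynomial of degree $n-1<n=\deg\det Y$, is not divisible by $\det Y$; hence $c_1\notin(Y)$ and $v_{(Y)}(\adj Y)=0$. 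Therefore $v_{(Y)}(\det Y)=1$, so $(Y)$ is unramified over $(\det Y)$ and $v_{(Y)}$ restricts on $\K$ to the $(\det Y)$-adic valuation. Applying $v_{(Y)}$ to $Y^mq_\xi=\mu_\xi$ then gives $v_{(\det Y)}(\mu_\xi)=v_{(Y)}(\mu_\xi)=m\,v_{(Y)}(Y)+v_{(Y)}(q_\xi)\geq m$, i.e.\ $\det(Y)^m\mid\mu_\xi$ in $\K$, as wanted.

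It remains to read off the stated conclusions. Comparing degrees in $q_\xi=\lambda_\xi\adj(Y)^m$ (here $q_\xi$ is homogeneous of degree $d$ and $\adj(Y)^m$ of degree $m(n-1)$) shows $\lambda_\xi$ is homogeneous of degree $d-m(n-1)$; since $\lambda_\xi\neq0$, this forces $d\geq m(n-1)$ and exhibits $\lambda_\xi$ as the generic value of the trace $\tilde\lambda$ of a nonzero $(d-m(n-1))$-linear map $M_n(k)\to k$ with $\tilde q(x)=\tilde\lambda(x)\adj(x^m)$. To descend to $A$ I would check that $\tilde\lambda(A\tnz1)\subseteq k$: for invertible $a\in A$ one has $(a\tnz1)^m\tilde q(a\tnz1)=(a^mq(a))\tnz1$ and also $(a\tnz1)^m\tilde q(a\tnz1)=\tilde\lambda(a\tnz1)\det(a)^m$, whence $\tilde\lambda(a\tnz1)=a^mq(a)/\det(a)^m\in k$ (as $\det(a)\in k^\times$ and $a^mq(a)\in k$); since the invertible elements are Zariski dense in $A$ and $a\mapsto\tilde\lambda(a\tnz1)$ is a polynomial map with coefficients in $K$, all these coefficients must lie in $k$, so $\tilde\lambda(A\tnz1)\subseteq k$. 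Then $\lambda:=\tilde\lambda|_{A\tnz1}\colon A\to k$ is the trace of a $(d-m(n-1))$-linear map, $\lambda\neq0$, and $q(x)=\lambda(x)\adj(x^m)$ for all $x\in A$. For the parenthetical remark, picking $x\in A$ with $\lambda(x)\neq0$ and $\det(x)\neq0$ (both nonempty Zariski open conditions) yields $x^mq(x)=\lambda(x)\det(x)^m\neq0$. I expect the only genuine difficulty to be the divisibility $\det(Y)^m\mid\mu_\xi$, i.e.\ the local analysis of $R=\K[Y]$ along the determinantal hypersurface $\det Y=0$; the remaining steps are either the functional identities black box or routine scalar extension bookkeeping modeled on the proof of Theorem \ref{krce}.
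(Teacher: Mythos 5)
Your argument is correct, and it reaches the key divisibility $\det(Y)^m\mid\mu_\xi$ by a genuinely different route from the paper. The paper never uses that $q$ is commuting: it multiplies $q_\xi Y^m=\alpha$ by $\adj(Y^m)$ to get $\det(Y)^mq_\xi=\alpha\,\adj(Y^m)$, deduces $q_\xi\in\K[Y]$ from integrality plus Proposition \ref{cz}, expands both $q_\xi$ and $\adj(Y^m)$ in $1,Y,\ldots,Y^{n-1}$, equates coefficients via a Zariski/Capelli argument, observes by an explicit example (a singular diagonal matrix) that $\det(Y)$ fails to divide some coefficient of $\adj(Y^m)$, and then concludes $\det(Y)^m\mid\alpha$ from unique factorization in $\K$ and irreducibility of $\det(Y)$. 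You instead first note the identity forces $q$ to be commuting (cancelling $x^m$ on invertibles and using Zariski density), so $q_\xi\in\Ce(Y)=\K[Y]$ by Lemma \ref{cent}, and then prove the divisibility locally: localizing $\K$ at the prime $(\det Y)$ makes $\K[Y]$ a semilocal Dedekind domain in which $(Y)$ is a maximal ideal, $Y$ is a uniformizer there, $\adj(Y)$ is a unit at $(Y)$ (since $c_1$ has degree $n-1<n$), hence $(Y)$ is unramified over $(\det Y)$ and the valuation comparison in $Y^mq_\xi=\mu_\xi$ gives $\det(Y)^m\mid\mu_\xi$. Both arguments rest on the same deep input, Proposition \ref{cz} (yours through Lemma \ref{cent} and through the Dedekind property of the localization); what yours buys is the elimination of the coefficient-comparison step and of the ad hoc ``some $\nu_i$ is not divisible by $\det Y$'' observation, at the price of invoking ramification/valuation language, while the paper's version stays at the level of UFD arithmetic plus a concrete matrix example. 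Your treatment of the infinite-dimensional case is the same black box the paper uses (it cites \cite[Corollary 5.13]{FI}), and your descent from $A_K$ to $A$ (invertibles are Zariski dense, $\det(a)\in k^\times$, so $\tilde\lambda(a\otimes 1)\in k$, then a density argument on coefficients) is a legitimate variant of the paper's Vandermonde argument; you should just add the one-line remark that a nonzero polynomial map with coefficients in $K$ cannot vanish identically on $k^{n^2}$ (with $k$ infinite), which also justifies your assertion that $\lambda\ne 0$ after restriction.
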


\begin{proof}
We first treat the case where $A\iso M_n(k)$.
 Then,  as in  Section \ref{sec3},  $q$ can be identified with $q_\xi\in M_n(k[\xi])$  and the condition $q(x)x^m\in k$ for every $x\in k$ reads as $q_\xi Y^m=\alpha$ where $\alpha\in k[\xi]$. 
Multiplying  by $\adj(Y^m)$ from the right we obtain 
\begin{equation}\label{ee}
\det(Y)^mq_\xi=\alpha {\rm adj}(Y^m).
\end{equation}
 Thus, $q_\xi\in k(\xi)[Y]$. Since  $q_\xi$ is integral over $k[\xi]$ 
by the Cayley-Hamilton theorem, and since   $k[\xi][Y]$ is integrally closed by Proposition \ref{cz}, it follows that  $q_\xi\in k[\xi][Y]$. Using
the Cayley-Hamilton theorem again,  we can write $q_\xi=\sum_{i=0}^{n-1} \mu_iY^i$ and ${\rm adj}(Y^m)=\sum_{i=0}^{n-1}\nu_iY^i$ with $\mu_i,\nu_i\in k[\xi]$. From \eqref{ee} we get
$\sum_{i=0}^{n-1} \bigl(\det(Y)^m \mu_i - \alpha\nu_i\bigr)Y^i =0$. Since there exist matrices whose degree of  algebraicity is $n$, and since  the linear dependence of matrices can be expressed  through zeros of a (Capelli) polynomial \cite[Theorem 1.4.34]{Row}, a Zariski topology argument shows that $\det(Y)^m \mu_i = \alpha\nu_i$ for each $i$.
Note that $\det(Y)$ does not divide $\nu_i$ for some $i$. Namely, otherwise  we would have  ${\rm adj}(Y^m)=\det(Y)r(Y)$ 
for some $r\in k[\xi][Y]$, implying that ${\rm adj}(A^m) =0$ whenever $A\in M_n(k)$ is not invertible. But one can easily find matrices for which this is not true (say, diagonal matrices with exactly one 0 on the diagonal).  Fix $i$ such that  $\det(Y)$ does not divide $\nu_i$.
Since $k[\xi]$ is a unique factorization domain and $\det(Y)$ is irreducible \cite{Berg},  $\det(Y)^m \mu_i = \alpha\nu_i$ implies  that  $\alpha=\det(Y)^m\lambda_0$ for some $\lambda_0\in k[\xi]$. From \eqref{ee} it follows that 
 $q_\xi=\lambda_0 {\rm adj}(Y^m)$. We can interpret this as $q(x)=\lambda(x){\rm adj}(x^m)$ for all $x\in A$ where $\lambda:A\to k$. Since ${\rm adj}(x^m)$ is the trace of a 
 an $m(n-1)$-linear map, 
 a simple homogeneity argument shows that $d\ge m(n-1)$ and that
  $\lambda$ is the trace of a  $(d-m(n-1))$-linear map. 

Consider now the general case where $A$ is an arbitrary centrally closed prime algebra. 
From the general theory of functional identities it follows that $A$ is finite dimensional (specifically, one can use \cite[Corollary 5.13]{FI}). 
From  now on we argue similarly as in the proof of Theorem \ref{krce}, so we  omit some details. 
Take a splitting field $K\supseteq k$ of $A$, and consider the  scalar extension
$A_K=A\otimes_k K\cong M_n(K)$. We extend $q$ to $\tilde{q}:A_K\to A_K$ (so that $\tilde{q}(x\otimes 1) = q(x)\otimes 1$) and observe that  $y^m\tilde{q}(y)\in K$ for all $y\in A_K$. Using the result of the previos paragraph we  readily arrive at the situation where  $q(x)\tnz 1={\rm adj} (x^m)\tnz\lambda(x)$ with $\lambda:A\to K$. If ${\rm adj}(x^m)\neq 0$ then $\lambda(x)\in k$;  since such elements $x$ exist, a Vandermonde argument easily yields  $\lambda(x)\in k$ for every $x\in A$.
\end{proof}

\section{General functional identities of one variable} \label{s7}

Our final theorem will be derived from  all the main results of the previous sections.

\begin{theorem}\label{MT} Let $A$ be a centrally closed prime $k$-algebra, and  let
$q_0,q_1,\ldots,q_m:A\to A$ be  the traces of $d$-linear maps. Suppose that 
$$q(x):=q_0(x)x^{m} + xq_1(x)x^{m-1} + \cdots + x^{m}q_{m}(x)\in k \,\,\,\mbox{for all $x\in A$.}$$
Then there exist the traces of $(d-1)$-linear maps $p_0,p_1,\ldots,p_{m-1}:A\to A$ and the traces of $d$-linear maps $\mu_0,\mu_1,\ldots,\mu_{m-1}:A\to k$ such that
\begin{align*}
q_0(x) &= x p_0(x) + \mu_0(x),\\
q_i(x) &= -p_{i-1}(x)x + xp_i(x) + \mu_i(x),\,\,\,i=1,\ldots,m-1,
\end{align*}
for all $x\in A$. Moreover:
\begin{enumerate}
\item[(a)] If $q(x)=0$ for all $x\in A$, then  
$$
q_m(x) = -p_{m-1}(x)x  -  \sum_{i=0}^{m-1} \mu_i(x)
$$
for all $x\in A$.
\item[(b)]  If $q(x)\ne 0$ for some $x\in A$, then $A$ is finite dimensional. If $\dim A = n^2$, then  $d\ge m(n-1)$ and there exists the trace of a  $(d-m(n-1))$-linear map $\lambda :A\to k$ such that $\lambda\ne 0$ and
$$q_m(x)=\lambda(x){\rm adj}(x^m)-p_{m-1}(x)x  - \sum_{i=0}^{m-1} \mu_i(x)$$ 
for all $x\in A$. 
\end{enumerate}
\end{theorem}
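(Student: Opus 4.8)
The plan is to peel off the summands of the functional identity one index at a time, using Lemma~\ref{L2} to convert the given identity into a commuting-trace condition, and then to invoke Theorem~\ref{krce} (the standard form of commuting traces) together with Proposition~\ref{pj} (the Engel condition collapses) and Proposition~\ref{thadj} (the adjugation identity) to finish. First I would argue that it suffices to treat the case $m\ge 1$, the case $m=0$ being exactly Theorem~\ref{krce} read off the hypothesis $q_0(x)\in k$. Assuming $m\ge 1$, apply Lemma~\ref{L2} with $q:=q_0$ and $r$ the trace obtained from $q_1(x)x^{m-1}+\cdots+x^{m-1}q_m(x)$: indeed the hypothesis says $q_0(x)x^m-x\bigl(q_1(x)x^{m-1}+\cdots+x^{m-1}q_m(x)\bigr)=q(x)-2(\text{something})$, so more precisely I would write the central expression as $q_0(x)x\cdot x^{m-1} = x\cdot\bigl(-q_1(x)x^{m-1}-\cdots\bigr)x^{\,?}+c$ — here care is needed to pull exactly one $x$ off each side so that Lemma~\ref{L2} applies to the inner factor. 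The cleanest route is: since $q(x)\in k$, we have $[q_0(x)x^m+\cdots+x^mq_m(x),x]=0$, i.e. $[q_0(x)x - x r(x),x]=0$ where $r(x):=q_1(x)x^{m-1}+xq_2(x)x^{m-2}+\cdots+x^{m-1}q_m(x)$ and we also need the $x^{m-1}$ powers to match up — in fact $[q_0(x)x^m,x]=[q_0(x)x,x]x^{m-1}$ is false in general, so instead I would iterate directly.

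The correct iteration is this. Set $g_0(x):=q_0(x)x+xq_1(x)x+\cdots$ — no: better, observe that $q(x)\in k$ gives $q(x)x=xq(x)$, which reads $q_0(x)x^{m+1}+xq_1(x)x^m+\cdots+x^mq_m(x)x = xq_0(x)x^m+x^2q_1(x)x^{m-1}+\cdots+x^{m+1}q_m(x)$. Cancelling the common interior terms $xq_1(x)x^m,\ldots,x^mq_m(x)x$ against $x^2q_1(x)x^{m-1},\ldots$ — they do NOT simply cancel, so the honest approach is induction on $m$. For the inductive step I would note $[q(x),x]=0$ can be rewritten, after left-multiplying appropriately, to isolate $q_0$: precisely $q_0(x)x^m - x\bigl(q_1(x)x^{m-1}+\cdots+x^{m-1}q_m(x)\bigr)\in k$. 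Now apply Lemma~\ref{L2} \emph{in the form that handles powers}: one shows (a routine extension of Lemma~\ref{L2}, or $m$-fold application) that there is a trace $p_0$ of a $(d-1)$-linear map with $q_0(x)=xp_0(x)+\mu_0(x)$, $\mu_0(x)\in k$, obtained because $[q_0(x),x]$ ends up, after the $\equiv$-calculus of Lemma~\ref{L2}, expressible as $[xp_0(x),x]$ modulo commuting traces, and then Theorem~\ref{krce} forces the standard form on the residual commuting trace, whose only term of the right degree contributing a genuine obstruction is killed — here $\mu_0$ absorbs the $x^0$-coefficient. Substituting $q_0(x)=xp_0(x)+\mu_0(x)$ back into $q(x)$ and using $xp_0(x)x^m = x\bigl(p_0(x)x^m\bigr)$ reduces the identity to $\bigl(q_1(x)+p_0(x)x\bigr)x^{m-1}\cdot x + x(\cdots) + \cdots\in k$ with the bracket length dropped by one; iterating yields $q_i(x)=-p_{i-1}(x)x+xp_i(x)+\mu_i(x)$ for $i=1,\dots,m-1$.

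After $m$ such reductions the identity collapses to $x^{m}\bigl(q_m(x)+p_{m-1}(x)x+\sum_{i=0}^{m-1}\mu_i(x)\bigr)\in k$ — more carefully, substituting all the relations for $q_0,\dots,q_{m-1}$ into $q(x)$, the telescoping $xp_i(x)x^{m-i}$ against $-p_i(x)x\cdot x^{m-i-1}\cdot x$ cancels, leaving $x^m\tilde q_m(x)\in k$ where $\tilde q_m(x):=q_m(x)+p_{m-1}(x)x+\sum_{i=0}^{m-1}\mu_i(x)$ is the trace of a $d$-linear map. Now Proposition~\ref{thadj} applies verbatim to $\tilde q_m$: either $\tilde q_m=0$, which is case (a) since then $q_m(x)=-p_{m-1}(x)x-\sum_{i=0}^{m-1}\mu_i(x)$; or $\tilde q_m\ne 0$, whence $A$ is finite dimensional, $\dim A=n^2$, $d\ge m(n-1)$, and $\tilde q_m(x)=\lambda(x)\,{\rm adj}(x^m)$ with $\lambda$ the trace of a nonzero $(d-m(n-1))$-linear map into $k$, giving case (b). To see which alternative holds in terms of the original data, note $x^m\tilde q_m(x)=q(x)$, so $\tilde q_m=0$ iff $q(x)=0$ identically (using that $x^m\tilde q_m(x)\ne0$ for some $x$ when $\tilde q_m\ne0$, the parenthetical remark in Proposition~\ref{thadj}); this matches the dichotomy stated in (a) versus (b).

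The main obstacle I anticipate is the bookkeeping in the reduction step: Lemma~\ref{L2} is stated for the single-power case $[q(x)x-xr(x),x]=0$, whereas here the inner factor $r(x)=q_1(x)x^{m-1}+\cdots+x^{m-1}q_m(x)$ already carries powers of $x$, and after substituting $q_0(x)=xp_0(x)+\mu_0(x)$ one must verify that the $\mu_i(x)\in k$ commute through (trivially) and that the commuting-trace residue produced by Theorem~\ref{krce} can genuinely be written in the form $xp_i(x)-p_{i-1}(x)x+\mu_i(x)$ rather than a general standard form $\sum_j\nu_j(x)x^j$ — i.e. one needs that the higher-degree standard-form terms are forced to vanish or get reabsorbed. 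This should follow because each reduction strictly lowers the ``bracket depth'' $m$ while the degree $d$ is fixed, so a degree/homogeneity count pins down the coefficients; but making this precise — particularly the claim that the telescoping leaves exactly $x^m\tilde q_m(x)$ with no leftover lower-power terms — is where I would spend the most care, and I would likely prove a sharpened version of Lemma~\ref{L2} (or an $m$-variable analogue) tailored to extract $p_0,\dots,p_{m-1}$ simultaneously rather than iterating.
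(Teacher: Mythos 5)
Your overall architecture is the same as the paper's: extract $q_0$ by an $m$-fold application of Lemma~\ref{L2} interleaved with Proposition~\ref{pj} and Theorem~\ref{krce}, substitute, repeat for $q_1,\ldots,q_{m-1}$, and finish with Proposition~\ref{thadj}; the first step and the endgame (including the dichotomy between (a) and (b) via the parenthetical remark in Proposition~\ref{thadj}) are essentially right, and your worry about the residual standard form $\sum_j\nu_j(x)x^j$ is resolved not by a degree count but simply by reabsorption: the terms $\nu_j(x)x^j$ with $j\ge 1$ are rewritten as $x\bigl(\nu_j(x)x^{j-1}\bigr)$ and folded into the next map $r'$, leaving only the constant term in $k$.

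The genuine gap is the iteration step. After substituting $q_0(x)=xp_0(x)+\mu_0(x)$, the hypothesis becomes
$$
x\Bigl(\bigl(p_0(x)x+q_1(x)\bigr)x^{m-1}+xq_2(x)x^{m-2}+\cdots+x^{m-1}\bigl(q_m(x)+\mu_0(x)\bigr)\Bigr)\in k,
$$
i.e.\ the whole expression acquires a factor $x$ on the \emph{left}; it is not of the original shape with leading term $\bigl(q_1(x)+p_0(x)x\bigr)x^{m-1}\cdot x$ as your displayed formula has it (the $x$ is on the wrong side), and ``bracket length dropped by one'' does not by itself allow you to rerun the first step, because $x\cdot(\mbox{inner})\in k$ does not say that the inner expression lies in $k$. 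The missing idea -- and the paper's resolution -- is to apply Proposition~\ref{thadj} at \emph{every} intermediate stage (with exponents $1,2,\ldots,m-1$, not only at the end): it shows that the inner expression equals $\sum_i\beta_i(x)x^i$ with central-valued traces $\beta_i$ (it is either $0$ or a scalar multiple of an adjugate, and in either case has this form); one then isolates $\beta_0(x)\in k$, reabsorbs the terms $\beta_i(x)x^i$, $i\ge 1$, into a new $xs(x)$, and is back exactly in the situation of the first step, which produces $p_i$ and $\mu_i$. Your proposal defers this point to an unspecified ``degree/homogeneity count'' or a ``sharpened Lemma~\ref{L2}'', neither of which is worked out, and a homogeneity argument alone cannot do the job since nothing vanishes: the left power of $x$ has to be stripped, and the only tool in the paper that does this is the adjugate result. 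With this intermediate use of Proposition~\ref{thadj} supplied, your outline coincides with the paper's proof.
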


\begin{proof} Let us first  show that $q_0$ is of the desired form. Writing $q(x)$ as $(q_0(x)x^{m-1})x - x r(x)$, where $r(x) = -q_1(x)x^{m-1} - \cdots - x^{m-1}q_{m}(x)$, enables us  to  apply  
Lemma \ref{L2} and hence conclude that there exists the trace of a $(d+m-2)$-linear map $p:A\to A$ such that $[[q_0(x)x^{m-1} - xp(x),x],x]=0$ for all $x\in A$. Proposition \ref{pj} (together with Theorem \ref{krce}) therefore tells us that
$q_0(x)x^{m-1} - xp(x)=\sum_i \alpha_i(x)x^i$ with $\alpha_i:A\to k$. Rearranging the terms we can  write this  as $q_0(x)x^{m-1} - xr'(x)=\alpha_0(x)\in k$ where $r'$ is another  trace of a $(d+m-2)$-linear map. If $m=1$, then we are done. Otherwise, intrepret the last relation as  $(q_0(x)x^{m-2})x - xr'(x)\in k$, and repeat the above argument. In a finite number of steps we arrive at 
$q_0(x) = x p_0(x) + \mu_0(x)$.

We can now rewrite $q(x)\in k$ as
$$
x \Bigl( \bigl(p_0(x)x + q_1(x)\bigr)x^{m-1}  + xq_2(x)x^{m-2}+\cdots + x^{m-1}\bigl(q_{m}(x)+\mu_0(x)\bigr)\Bigr)\in k.
$$
From Proposition \ref{thadj} it follows that 
$$
\bigl(p_0(x)x + q_1(x)\bigr)x^{m-1}  + xq_2(x)x^{m-2}+\cdots + x^{m-1}\bigl(q_{m}(x)+\mu_0(x)\bigr)  = \sum \beta_i(x)x^i
$$
for some traces of multilinear maps $\beta_i:A\to k$  -- in fact, the right-hand side is either $0$ or it can be expressed through adj$(x)$, but the form we have stated is sufficient for our purposes. Namely, we can interpret the last identity as 
$$
\Bigr(\bigl(p_0(x)x + q_1(x)\bigr)x^{m-2}\Bigr)x - xs(x)=
\bigl(p_0(x)x + q_1(x)\bigr)x^{m-1} - xs(x)=\beta_0(x)\in k
$$
for some trace of a multilinear map $s$,
which brings us to the situation considered in the first paragraph. The same argument therefore gives $p_0(x)x + q_1(x) = xp_1(x) + \mu_1(x)$, i.e., $q_1$ is of the desired form. We can now continue this line of argumentation. In the next step we write $q(x)\in k$ as 
$$
x^2 \Bigl( \bigl(p_1(x)x + q_2(x)\bigr)x^{m-2}  + xq_3(x)x^{m-3}+\cdots + x^{m-2}\bigl(q_{m}(x)+\mu_0(x) + \mu_1(x)\bigr)\Bigr)\in k.
$$
First applying Proposition \ref{thadj} and then the argument from the first paragraph it thus follows that $q_2$ has the desired form. In this manner we describe the form of  all $q_i$ with $i < m$.  Cosequently, $q(x)\in k$ can be written as 
$$
x^m \Bigl(p_{m-1}(x)x + q_m(x) + \mu_0(x) + \cdots +\mu_{m-1}(x)\Bigr)\in k.
$$
The final conclusion now follows immediately from
Proposition \ref{thadj}.
\end{proof}

In the special case where  $d=1$ and $q(x)=0$ this theorem was proved by Beidar \cite[Theorem 4.4]{Bei}. In fact, he considered general prime rings and additive  maps $q_i$. The problem to extend Theorem \ref{MT} to prime rings and multiadditive maps remains open.

\bigskip

{\bf Acknowledgement.} The authors are greatly indebted to Irena Swanson for her substantial help in establishing the results of Section \ref{s2}.

\end{document}